\numberwithin{equation}{section}
\pgfplotsset{compat=1.18}
\newtheorem{theorem}{Theorem}[section]
\newtheorem*{theorem*}{Theorem}
\newtheorem{lemma}[theorem]{Lemma}
\newtheorem{proposition}[theorem]{Proposition}
\newtheorem*{proposition*}{Proposition}
\theoremstyle{definition}
\newtheorem{definition}[theorem]{Definition}
\newtheorem{remark}[theorem]{Remark}
\title{Continuous Persistence Landscapes}
\author{Peter Bubenik$^\dagger$ and Wanchen Zhao$^\dagger$}
\address{$^\dagger$ Department of Mathematics, University of Florida}
\newcommand{\R}{\mathbb{R}}
\newcommand{\B}{\mathcal{B}}
\newcommand{\E}{\mathcal{E}}
\newcommand{\Q}{\mathbb{Q}}
\newcommand{\N}{\mathbb{N}}
\newcommand{\Z}{\mathbb{Z}}
\newcommand{\kf}{\mathbb k}
\newcommand{\Rleq}{\R_\leq^2}
\newcommand{\Rl}{\R_{<}^2}
\newcommand{\veck}{\mathsf{vect}_{\mathbb k}}
\newcommand{\Qth}{Q_{t,h}}
\newcommand{\cQth}{\overline{Q}_{t,h}}
\newcommand{\rk}{\operatorname{rank}}
\newcommand{\Rpt}{(\Rleq, \Delta)}
\newcommand{\Wrk}{W_1^{\rk}}
\newcommand{\drank}{d_{\rk}}
\begin{document}

\begin{abstract}
    As the size of data increase, persistence diagrams often exhibit structured asymptotic behavior, converging weakly to a Radon measure. However, conventional vector summaries such as persistence landscapes are not well-behaved in this setting, particularly for diagrams with high point multiplicities. We introduce continuous persistence landscapes, a new vectorization defined on a special class of Borel measures, which we call q-tame measures. It includes both the persistence diagrams and their weak limits. Our construction generalizes persistence landscapes to a measure-theoretic setting, preserving the intrinsic structure of persistence measures. We show that this vector summary is bijective and $L^1-$stable under mild assumptions, and that the original measure can be uniquely reconstructed. This approach gives a more faithful description of the shape of data in the limit and provides a stable, invertible way to analyze topological features in large systems. 
\end{abstract}

\maketitle

\section{Introduction} \label{intro}


Understanding the asymptotic behavior of persistence diagrams (PD) and their vector summaries is of both theoretical and practical interest. As data size and complexity increase, PDs tend to exhibit structured or clustered patterns \cite{Hiraoka2016}. Some works have analyzed the asymptotic behavior of persistence landscapes~\cite{pl} and related summaries \cite{chazal15}, but these approaches face limitations: when the points have high multiplicities, the first many layers of the persistence landscapes may be identical, capturing only the outer envelope of the support of the rank function. Moreover, embeddings of PDs into Hilbert spaces fail to preserve their intrinsic metric structure \cite{BW2020}, making it difficult to interpret or recover asymptotic limits in the original topological sense.

Convergence results for PDs have been established under various metrics and sampling methods, showing that PDs converge weakly to Radon measures supported on $\{(x,y)\in \R^2 \ : \ x<y\}$ \cite{Divol_Chazal_2020, Divol-Lacombe2021}. However, a vectorization that is meaningful in this broader measure-theoretic setting has been lacking.

We introduce the continuous persistence landscape, a vector summary defined for a class of Borel measures on $\{(x,y)\in \R^2\ : \ x<y\}$ that we call $q$-tame measures, which includes both PDs and their weak limits. We prove that this vectorization is bijective and $L^1$-stable under mild assumptions. Our construction not only extends persistence landscapes to a measure-theoretic setting but also allows one to reconstruct the original measure via Carathéodory's Extension Theorem. Thus, continuous persistence landscapes provide a robust and invertible representation of persistence measures, bridging the gap between discrete PDs and their limits.

By considering this more general version of persistence landscapes, we discover a previously overlooked property of persistence landscapes~\cite{Bubenik:2020b}. Together with the new property, we get a full characterization of continuous persistence landscapes, stated in Proposition \ref{pl properties}. These are the necessary and sufficient conditions for reconstructing the $q-$tame measures.
When taking $N$ samples of PDs of finitely many points, the average persistence landscapes satisfy a law of large numbers and Central Limit theorem as $N$ approaches $\infty$ \cite{pl}. We compare the continuous persistence landscapes of the mean measure of these samples to the average persistence landscapes, and show that, in general, they are incomparable. In Proposition \ref{apl and cpl}, we give a necessary condition for the average persistence landscapes to be an upper bound of the continuous persistence landscapes.

\subsection{Related work and our contribution}
Divol and Chazal showed the PDs converge to a Radon measure with density supported on $\Rl$ with respect to the $2$-norm on the plane \cite{Divol_Chazal_2020}. Divol and Lacombe proved convergence and provided the convergence rate of PDs with respect to the $p-$Wasserstein distance with the $p-$norm on the plane, which is widely used in TDA \cite{Divol-Lacombe2021}. Chazal et al. showed the average persistence landscapes are stable with respect to changes of the underlying probability measure on point clouds \cite{chazal15}. This approach differs from ours in that the authors randomly subsample point clouds of fixed size and compute the average persistence landscapes, which estimate the expected persistence landscape. Our approach does not fix the size of a point cloud and allows the PDs to have infinitely many points. We show that if the PDs have finite 1-Wasserstein distance to the empty diagram, then their continuous persistence landscapes are $L^1-$stable with respect to the 1-Wasserstein distance.

\section{Background} \label{bg}

We introduce notation and review basic notions in persistent homology and measure theory. 
We will use the following subsets of $\R^2$.
Let $\Rl = \{ (x,y) \ : \ x < y\}$,
$\Delta = \{ (x,y) \ : \ x=y\}$, and 
$\Rleq = \Rl \cup \Delta$.
As needed, we equip these with their subspace topology.
For $t \in \R$ and $h \geq 0$, 
let $\Qth = (-\infty,t-h) \times (t+h,\infty)$ and
$\cQth = (-\infty,t-h] \times [t+h,\infty)$.

\subsection{Measure Theory}

The Borel $\sigma-$algebra on $\Rl$ is the smallest $\sigma-$algebra generated by the open sets. 
One set of generators of this Borel $\sigma-$algebra is given by
$\{\Qth \ : \ t \in \R, h \geq 0 \}$.
A Borel measure is a measure defined on this Borel $\sigma-$ algebra. 

Let $X$ be a set and $\Sigma$ be a $\sigma-$algebra over $X$. A set function $\mu:\Sigma \to [0, \infty]$ is a \emph{measure} if \textbf{(1)} $\mu(\emptyset)=0$, and
\textbf{(2)} ($\sigma-$additive) for every countable set $\{ E_i \}_{i=1}^{\infty}$ of pairwise disjoint sets, \[ \mu (\overset{\infty}{\underset{i=1}{\bigcup}} E_i) = \overset{\infty}{\underset{i=1}{\sum}} \mu (E_i)\]

Let $\Sigma_0$ be an algebra on $X$. 
A function $\mu_0: \Sigma_0 \to [0,\infty]$ is a \textit{premeasure} if $\mu_0$ satisfies \textbf{(1)} and \textbf{(2)} above. A function $\mu':2^{X}\to [0,\infty]$ is an \textit{outermeasure} if $\mu'(\emptyset) = 0$, $\mu'(A)\leq \mu'(B)$ whenever $A\subset B$, and $\sigma-$subadditive, i.e. for every countable set $\{ E_i \}_{i=1}^{\infty}$ of pairwise disjoint sets, \[ \mu (\overset{\infty}{\underset{i=1}{\bigcup}} E_i) \leq \overset{\infty}{\underset{i=1}{\sum}} \mu (E_i). \] A premeasure naturally give rise to an outermeasure by defining 
\[ \mu'(S) = \inf \Bigl\{ \overset{\infty}{\underset{i=1}{\sum}} \mu_0(A_i): A_i\in \Sigma_0, S \subset \overset{\infty}{\underset{i=1}{\bigcup}} A_i \Bigr\}.
\]
Given an outermeasure $\mu'$, a subset $E\subset X$ is \textit{$\mu'-$measurable}, or \textit{outermeasurable} when the outermeasure is clear in context, if for every $F\subset X$, $\mu' (F) = \mu'(F\cap E) + \mu'(F \setminus E)$.

\begin{theorem}
    (Carathéodory's Extension Theorem) Let $\Sigma_0, \Sigma$ be the algebra and $\sigma-$algebra over a set $X$. A premeasure $\mu_0:\Sigma_0\to[0,\infty]$ can be extended to a measure $\mu:\Sigma \to [0,\infty]$. Additionally, if $X$ is $\sigma-$finite, the extension is unique.
\end{theorem}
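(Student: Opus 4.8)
The plan is to follow the classical route through outer measures. First I would take the outer measure $\mu'$ associated to $\mu_0$ by the infimum formula already displayed in the excerpt, and check the three defining properties: $\mu'(\emptyset)=0$ is immediate, monotonicity holds because any $\Sigma_0$-cover of $B$ is also a cover of any $A \subseteq B$, and countable subadditivity follows by the standard $\varepsilon/2^i$ trick, picking for each $E_i$ a cover whose total $\mu_0$-mass is within $\varepsilon 2^{-i}$ of $\mu'(E_i)$ and concatenating these covers.

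Next I would show $\mu'$ actually extends $\mu_0$, i.e.\ $\mu'|_{\Sigma_0} = \mu_0$. The inequality $\mu'(A) \le \mu_0(A)$ for $A \in \Sigma_0$ is trivial (use the one-element cover $\{A\}$). The reverse inequality $\mu_0(A) \le \mu'(A)$ is the first substantive step: given a cover $A \subseteq \bigcup_i A_i$ with $A_i \in \Sigma_0$, disjointify by setting $B_i = (A \cap A_i) \setminus \bigcup_{j<i} A_j \in \Sigma_0$, so that $A = \bigsqcup_i B_i$; then $\sigma$-additivity of the premeasure together with finite additivity and monotonicity gives $\mu_0(A) = \sum_i \mu_0(B_i) \le \sum_i \mu_0(A_i)$, and taking the infimum over covers yields the claim.

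Then I would invoke Carathéodory's lemma: the collection $\mathcal{M}$ of $\mu'$-measurable sets is a $\sigma$-algebra on which $\mu'$ restricts to a (complete) measure. I would sketch that $\mathcal{M}$ contains $\emptyset$ and $X$, is closed under complements (the defining splitting condition is symmetric in $E$ and $X \setminus E$), is closed under finite unions by a short computation splitting a test set across $E_1,E_2$, and finally is closed under countable disjoint unions with $\mu'$ adding over the pieces, via the finite-union case, monotonicity, and a limiting argument. The remaining point is that every $A \in \Sigma_0$ lies in $\mathcal{M}$: for a test set $F$ and any $\Sigma_0$-cover $\{A_i\}$ of $F$, finite additivity of $\mu_0$ gives $\mu_0(A_i) = \mu_0(A_i \cap A) + \mu_0(A_i \setminus A)$, so summing and taking the infimum yields $\mu'(F) \ge \mu'(F \cap A) + \mu'(F \setminus A)$, the reverse being subadditivity. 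Hence $\Sigma_0 \subseteq \mathcal{M}$, and since $\mathcal{M}$ is a $\sigma$-algebra it contains $\Sigma = \sigma(\Sigma_0)$; setting $\mu := \mu'|_{\Sigma}$ produces the desired extension.

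Finally, for uniqueness under $\sigma$-finiteness, suppose $\mu$ and $\nu$ both extend $\mu_0$. I would first handle the finite case $\mu_0(X) < \infty$: the family $\{E \in \Sigma : \mu(E) = \nu(E)\}$ is a Dynkin ($\lambda$-)system containing the $\pi$-system $\Sigma_0$, so by the $\pi$-$\lambda$ theorem it contains $\sigma(\Sigma_0) = \Sigma$. For the general $\sigma$-finite case, write $X = \bigcup_n X_n$ with $X_n \in \Sigma_0$ increasing and $\mu_0(X_n) < \infty$, apply the finite case to the restricted premeasures on each $X_n$, and pass to the limit using continuity from below. The step I expect to be the main obstacle is Carathéodory's lemma itself --- verifying that $\mathcal{M}$ is a $\sigma$-algebra on which $\mu'$ is countably additive --- together with the disjointification needed for $\mu_0 \le \mu'$ on $\Sigma_0$; the uniqueness half is comparatively routine once the $\pi$-$\lambda$ machinery is available.
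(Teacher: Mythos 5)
The paper states Carath\'eodory's Extension Theorem as a background result and gives no proof of it, so there is nothing in the paper to compare your argument against. Your outline is the standard textbook proof --- induced outer measure, verification that $\mu'$ restricts to $\mu_0$ on $\Sigma_0$ via disjointification, Carath\'eodory's lemma that the $\mu'$-measurable sets form a $\sigma$-algebra containing $\Sigma_0$, and the $\pi$-$\lambda$ theorem plus exhaustion for uniqueness in the $\sigma$-finite case --- and it is correct as sketched.
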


\begin{definition} \label{wp def for measures}
    Let $\mu, \nu$ be measures on $\Rl$. Let $d$ be a metric on $\Rleq$. A coupling $\gamma$ between $\mu, \nu$ is a Borel measure on $\Rleq \times \Rleq$ such that the push forward measures under the projection maps, $p_1, p_2: \Rleq \times \Rleq \to \Rleq$, recover $\mu, \nu$ on $\Rl$ respectively. The Wasserstein distance between $\mu, \nu$ is given by: \[ W_{p}^{d}(\mu, \nu) = \underset{\gamma}{\inf} \int d(x,y)^p d\gamma ^{1/p} \]
\end{definition}


\subsection{Persistent Homology} \label{ph}

We consider $\R$ with its usual linear order to be a category $\textbf{R}$,
with objects $\{ r\in \R \}$ and morphisms corresponding to $ r\leq s$ in $\R$. Given a field $\kf$, let $\veck$ denote the category with objects being finite-dimensional $\kf-$vector spaces and morphisms being $\kf-$linear maps. A persistence module $M$ is a functor from $\R_{\leq}$ to $\veck$, which consists of vector spaces $\{ M_r \ : \ r \in \R \}$ and $\kf-$linear maps $\{ M_{r\leq s}\ : \ r\leq s \in \R_{\leq}\}$. We drop the field coefficient if it is clear from the context.

Let $I$ be an interval in $\R$. Its corresponding interval module $M$ consists of objects $ M_r = \kf$ if $r\in I $ and $M_t = 0$ if $t\notin I$, and morphisms $M_{r\leq t} = Id$ if $r\le t\in I$ and $M_{r\leq t} = 0$ if not. 
Pointwise-finite-dimensional persistence modules can be uniquely decomposed into a direct sum of interval modules~\cite{crawley}. 

The decomposition can be represented via a \textit{persistence diagram (PD)} by assigning each interval module with interval endpoints $a,b$ to a point $(a,b)\in \Rleq$. A persistence diagram can be viewed as a discrete measure, which is a Borel measure, on $\Rl$. In this paper, we only consider bounded interval modules, so persistence diagrams do not have points at infinity.

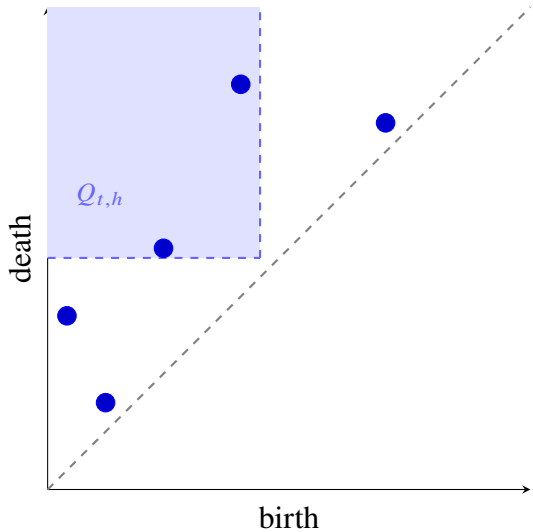
\begin{figure}[ht]
  \centering
  \begin{tikzpicture}
    \def\t{2.3}  
    \def\h{0.1}  
    \pgfmathsetmacro{\xb}{\t-\h}
    \pgfmathsetmacro{\yb}{\t+\h}

    \def\Xmax{5}
    \def\Ymax{5}

    \begin{axis}[
      width=8cm, height=8cm,
      xmin=0, xmax=\Xmax, ymin=0, ymax=\Ymax,
      axis lines=left,
      xlabel={birth}, ylabel={death},
      xtick=\empty, ytick=\empty,
      grid=both, grid style={dashed,gray!20},
      axis equal image,
      clip=false
    ]

    \addplot[domain=0:5, samples=2, gray, thick, dashed] {x};

    \addplot [
      draw=none,
      fill=blue!12
    ] coordinates {
      (0,\yb)
      (\xb,\yb)
      (\xb,\Ymax)
      (0,\Ymax)
    };

    \addplot[blue!60, thick, dashed] coordinates {(\xb,\yb) (\xb,\Ymax)};

    \addplot[blue!60, thick, dashed] coordinates {(0,\yb) (\xb,\yb)};

    \node[blue!60, font=\footnotesize]
      at (axis cs:{0.25*\xb},{\yb+0.25*(\Ymax-\yb)}) {$\Qth$};

    \addplot+[only marks, mark=*, mark size=3.5pt] coordinates {
      (0.2,1.8)
      (0.6,0.9)
      (1.2,2.5)
      (2.0,4.2)
      (3.5,3.8)
    };

    \end{axis}
  \end{tikzpicture}

  \caption{A persistence diagram with $\Qth = (-\infty, t-h)\times(t+h, \infty)$ shaded. 
  }
  \label{fig:pd-Qth-shaded}
\end{figure}

A persistence module is $q-$\textit{tame} if every non-identity linear maps has finite rank, i.e. $\forall r<s, \rk M_{r\leq s} < \infty$. 

\section{Persistence Measures and continuous persistence landscapes} \label{pm and cpl section}


In this section we define continuous persistence landscapes for $q$-tame measures, which includes Radon measures on $\Rl$.
%

\begin{definition}
    Let $\mu$ be a Borel measure on $\Rl$. We say $\mu$ is \emph{$q-$tame} if $ \forall t\in \R, h \geq 0$, $\mu(\Qth) < \infty$.
\end{definition}

We define persistence measures to be $q-$tame measures on $\Rl$. Observe $q-$tame measures are Radon measures on $\Rl$, because they assign finite weight to the rectangles $\{(a,b)\times (c,d) \subset \Rl: b\le c\}$. Every compact set is covered by finitely many such rectangles, so it has finite weight. The measure is inner regular on open sets because every open set can be written as an increasing sequence of compact sets in $\Rl$ with finite weight. 


\subsection{Continuous persistence landscapes}

We define a map from the set of persistence measures to a Banach space. We prove that this map is bijective and has a nice description for its image. 

The persistence landscapes are function on $\Z_+ \times \R$ where the first parameter refers to the layer. We extend the definition for persistence measures as follows. 

Let $\B(\Rl)$ denote the Borel $\sigma-$algebra on $\Rl$. 

\begin{definition} \label{cpl def}
    Given a $q-$tame measure $\mu:\B(\Rl)\rightarrow \R_{\geq 0}$, its continuous persistence landscape is given by $\lambda: (0,\infty) \times \R\rightarrow [0,\infty)$ where \[\lambda(a,t)=\sup\{h > 0\ : \ \mu(\Qth) \geq a\},\] where $\sup \emptyset$ is defined to be $0$.
\end{definition}
The function $\lambda$ is well-defined because $\mu(\Qth)$ is finite and thus $\mu(\Qth)\rightarrow 0$ as $h\rightarrow \infty$.

Let $\Lambda$ denote the map from the set of $q-$tame measures to the continuous persistence landscapes. The image set will be fully described in section \ref{pl properties section}.

\subsection{Properties of Persistent Landscape:} \label{pl properties section} 

We first introduce some notation that we will use to characterize continuous persistence landscapes.

\begin{figure}[ht]
\centering
\begin{tikzpicture}

\def\tone{4.2}
\def\hone{1.35}

\def\ttwo{5.0}
\def\htwo{1.50}

\pgfmathsetmacro{\xone}{\tone-\hone}
\pgfmathsetmacro{\xtwo}{\tone+\hone}
\pgfmathsetmacro{\yone}{\ttwo-\htwo}
\pgfmathsetmacro{\ytwo}{\ttwo+\htwo}

\pgfmathsetmacro{\zone}{max(\xone,\yone)}
\pgfmathsetmacro{\ztwo}{min(\xtwo,\ytwo)}

\pgfmathsetmacro{\tthree}{(\zone+\ztwo)/2}
\pgfmathsetmacro{\hthree}{(\ztwo-\zone)/2}

\pgfmathsetmacro{\wone}{min(\xone,\yone)}
\pgfmathsetmacro{\wtwo}{max(\xtwo,\ytwo)}

\pgfmathsetmacro{\tfour}{(\wone+\wtwo)/2}
\pgfmathsetmacro{\hfour}{(\wtwo-\wone)/2}

\def\Xmax{10}
\def\Ymax{10}

\begin{axis}[
  width=10cm, height=10cm,
  xmin=0, xmax=\Xmax, ymin=0, ymax=\Ymax,
  axis lines=left,
  xlabel={birth}, ylabel={death},
  xtick=\empty, ytick=\empty,
  grid=both, grid style={dashed,gray!20},
  axis equal image,
  clip=false
]

\addplot[domain=0:\Xmax, samples=2, gray, thick, dashed] {x};

\newcommand{\ShadeQ}[3]{
  \pgfmathsetmacro{\tt}{#1}
  \pgfmathsetmacro{\hh}{#2}
  \pgfmathsetmacro{\xb}{\tt-\hh}
  \pgfmathsetmacro{\yb}{\tt+\hh}
  \addplot[draw=none, fill=#3!40, opacity=0.35] coordinates {
    (0,\yb)
    (\xb,\yb)
    (\xb,\Ymax)
    (0,\Ymax)
  };
  \addplot[#3!80!black, thick, dashed] coordinates {(\xb,\yb) (\xb,\Ymax)};
  \addplot[#3!80!black, thick, dashed] coordinates {(0,\yb) (\xb,\yb)};
}

\ShadeQ{\tone}{\hone}{blue}
\ShadeQ{\ttwo}{\htwo}{blue}
\ShadeQ{\tthree}{\hthree}{green}
\ShadeQ{\tfour}{\hfour}{yellow}

\node[blue!70!black]   at (axis cs:2.5,4.7) {$Q_x$};
\node[red!70!black]    at (axis cs:4.2,6.8) {$Q_y$};
\node[green!70!black]  at (axis cs:4.9,5.5) {$Q_z = Q_x \wedge Q_y$};
\node[purple!80!black] at (axis cs:1.5,7.1) {$Q_w = Q_x \vee Q_y$};

\end{axis}
\end{tikzpicture}

\caption{We order the quadrants with reverse containment. This choice is consistent with interval modules with the containment order.}
\label{fig: quadrants relation}
\end{figure}

Fix $x,y \in \R ^2$ where $x=(x_1, x_2), y=(y_1, y_2)$ and $x_1 = t_1-h_1, x_2 = t_1+h_1, y_1 = t_2-h_2, y_2 = t_2+h_2$. See figure \ref{fig: quadrants relation} for $x,y$, the corresponding quadrants $Q_x, Q_y$, and their meet and join under reverse containment of quadrants.


Fix $z=(z_1, z_2)=(t_3-h_3, t_3+h_3)$ where $z_1=$max$\{x_1, y_1\}$ and $z_2=$min$\{x_2, y_2\}$. Geometrically, $Q_z$ is the minimum quadrant containing $Q_x$ and $Q_y$. Similarly fix $w=(w_1, w_2)=(t_4-h_4, t_4+h_4)$ where $w_1=$min$\{x_1, y_1\}$ and $w_2=$max$\{x_2, y_2\}$. $Q_w$ is the intersection of $Q_x$ and $Q_y$, or the maximum quadrant contained by both $Q_x$ and $Q_y$.

W.l.o.g. suppose $x_1\leq y_1$. If $y_2\leq x_2$, then $Q_x\subset Q_y$. If $y_2\geq x_2$, then $z=(y_1, x_2)$, and we require the function $\lambda$ to satisfy the following property:

Given arbitrary $a, b, c >0$ such that $h_1\leq \lambda(a,t_1)$, $h_2\leq \lambda(b,t_2)$, and $h_4\leq \lambda(c,t_4)$, there exists $d>0$ such that 
\begin{align} \label{condition 3}
\text{$h_3\leq \lambda(d,t_3)$ and $d\geq a+b-c$.}
\end{align}

This property on the continuous persistence landscapes ensure the non-negativity of the premeasure we construct from the vector summary. 

\begin{proposition} \label{pl properties}
$\lambda$ has the following properties:
\begin{enumerate}
    \item $\lambda(-,t)$ is decreasing and $\lambda(a,t)\rightarrow 0$ as $a\rightarrow \infty$. 
    \item $\lambda(a,-)$ is 1-Lipschitz.
    \item $\lambda(-,t)$ is left continuous.
    \item \ref{condition 3} holds. 
\end{enumerate} 
\end{proposition}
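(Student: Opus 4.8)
The plan is to verify the four properties in order, using the defining formula $\lambda(a,t)=\sup\{h>0 : \mu(\Qth)\geq a\}$ and two elementary monotonicity facts about the function $(t,h)\mapsto\mu(\Qth)$: for fixed $t$ it is nonincreasing in $h$ (since $\Qth$ shrinks as $h$ grows), and more generally $Q_{t,h}\supseteq Q_{t',h'}$ implies $\mu(\Qth)\geq\mu(Q_{t',h'})$.

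For property (1), fix $t$. If $a\leq a'$ then $\{h>0:\mu(\Qth)\geq a'\}\subseteq\{h>0:\mu(\Qth)\geq a\}$, so the sup is nonincreasing in $a$; this gives that $\lambda(-,t)$ is decreasing. For the limit as $a\to\infty$, I would argue by contradiction: if $\lambda(a,t)$ did not tend to $0$, there would be some $h_0>0$ with $\lambda(a,t)\geq h_0$ for all $a$, which by left continuity (property (3), or directly by taking a supremum of nested sets) forces $\mu(Q_{t,h_0/2})\geq a$ for all $a$, contradicting $q$-tameness. I should be slightly careful to phrase this without circularity, so I would instead observe directly: if $\lambda(a,t)\geq h_0$ then for every $h<h_0$ there is $h'\in(h,h_0]$ with $\mu(Q_{t,h'})\geq a$, hence $\mu(Q_{t,h})\geq\mu(Q_{t,h'})\geq a$; taking $h=h_0/2$ gives $\mu(Q_{t,h_0/2})\geq a$, and letting $a\to\infty$ contradicts finiteness.

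For property (2), I want $|\lambda(a,t)-\lambda(a,t')|\leq|t-t'|$. The key geometric observation is that shifting the parameter $t$ by $s$ and simultaneously shrinking the width parameter $h$ by $|s|$ can only shrink the quadrant: $Q_{t+s,\,h-|s|}\subseteq Q_{t,h}$ whenever $h\geq|s|$, because $(t+s)-(h-|s|)\geq t-h$ and $(t+s)+(h-|s|)\leq t+h$. Hence if $\mu(\Qth)\geq a$ then $\mu(Q_{t+s,h-|s|})\leq\mu(\Qth)$ — wait, that inequality goes the wrong way, so I instead use it contrapositively together with the sup: from $h\le\lambda(a,t)$ (approximately) I get $\mu(Q_{t,h})\ge a$, and I want a comparable quadrant at $t'$. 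The correct move is: for any $h<\lambda(a,t)$ pick $h'\in(h,\lambda(a,t)]$ with $\mu(Q_{t,h'})\geq a$; then $Q_{t,h'}\supseteq Q_{t',\,h'+|t-t'|}$... again the containment is backwards. Let me restate cleanly: $Q_{t,h}\subseteq Q_{t',\,h+|t-t'|}$ is false; what is true is $Q_{t',\,h+|t-t'|}\subseteq Q_{t,h}\subseteq Q_{t',\,h-|t-t'|}$ for $h\geq|t-t'|$. So $\mu(Q_{t',h-|t-t'|})\geq\mu(Q_{t,h})\geq a$, which gives $\lambda(a,t')\geq h-|t-t'|$; taking $h\uparrow\lambda(a,t)$ yields $\lambda(a,t')\geq\lambda(a,t)-|t-t'|$, and symmetry finishes (2). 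For property (3), left continuity of $\lambda(-,t)$: fix $a$ and let $a_n\uparrow a$; monotonicity gives $\lim\lambda(a_n,t)\geq\lambda(a,t)$ and I must show the reverse. Write $\ell=\lim\lambda(a_n,t)$; for each $n$ and each $h<\lambda(a_n,t)$ there is $h'\in(h,\lambda(a_n,t)]$ with $\mu(Q_{t,h'})\geq a_n$, hence $\mu(Q_{t,h})\geq a_n$; so for every $h<\ell$ we get $\mu(Q_{t,h})\geq a_n$ for all large $n$, thus $\mu(Q_{t,h})\geq a$, giving $\lambda(a,t)\geq\ell$.

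The main obstacle is property (4), condition \eqref{condition 3}, which is the genuinely new ingredient. Here the geometry is that in the case $x_1\leq y_1$, $y_2\geq x_2$ we have $Q_z=Q_x\vee Q_y$ (join under reverse containment, i.e. the smallest quadrant containing both) and $Q_w=Q_x\wedge Q_y=Q_x\cap Q_y$, so $Q_x\cup Q_y\subseteq Q_z$ while $Q_w\subseteq Q_x,Q_y$, and crucially $Q_x\cap Q_y = Q_w$ and $Q_x\cup Q_y$ together with inclusion–exclusion give $\mu(Q_x)+\mu(Q_y)=\mu(Q_x\cup Q_y)+\mu(Q_x\cap Q_y)\leq\mu(Q_z)+\mu(Q_w)$. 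The plan: from $h_1\leq\lambda(a,t_1)$ deduce $\mu(Q_x)\geq a$ — more precisely, using left continuity / the sup definition, for any $\varepsilon>0$ we can find $h_1'\geq h_1$ with $\mu(Q_{t_1,h_1'})\geq a-\varepsilon$ and similarly for $b,c$; set the comparison quadrants accordingly and note the analogous nesting relations still hold for the perturbed quadrants because the $\max$/$\min$ structure is monotone. Then $\mu(Q_z)\geq\mu(Q_x)+\mu(Q_y)-\mu(Q_w)\geq a+b-c$ (absorbing the $\varepsilon$'s, or carrying them and taking a limit), and we can set $d=\mu(Q_{t_3,h_3})$, or rather $d=a+b-c$ if this is positive, observing $h_3\leq\lambda(d,t_3)$ since $\mu(Q_{t_3,h_3})\geq a+b-c=d$; if $a+b-c\leq 0$ the condition $d\geq a+b-c$ is vacuous and any small $d$ with $\mu(Q_{t_3,h_3})\geq d$ works, which exists because $h_3$ can be taken strictly less than the relevant sup (or handled by the same $\varepsilon$-argument). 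The delicate points I expect to spend the most care on are: (a) making the passage from "$h\leq\lambda(a,t)$" to "$\mu(Q_{t,h})\geq a$" rigorous at the endpoint, which requires left continuity or an explicit $\varepsilon$-approximation since the sup need not be attained; and (b) checking that the perturbed quadrants $Q_{t_1,h_1'}$, $Q_{t_2,h_2'}$ used in inclusion–exclusion still satisfy that their join is contained in a quadrant with $t$-parameter $t_3$ and their meet equals a quadrant with $t$-parameter $t_4$, which follows from the coordinatewise $\max/\min$ description but should be stated carefully.
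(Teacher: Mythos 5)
Your arguments for properties (1)--(3) are correct and essentially identical to the paper's: (1) is the same set-inclusion argument (your explicit treatment of $\lambda(a,t)\to 0$ via $q$-tameness is a welcome addition, since the paper only addresses it in the remark after Definition \ref{cpl def}); (2) is the paper's case analysis, organized around the containment $Q_{t,h}\subseteq Q_{t',\,h-|t-t'|}$ for $h\geq |t-t'|$; and (3) fills in the paper's one-line appeal to the definition. The observation you use throughout, that $h<\lambda(a,t)$ implies $\mu(\Qth)\geq a$ by monotonicity of $\mu(\Qth)$ in $h$, is exactly the right tool.

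For property (4), however, your argument has a genuine gap, and it is the same gap that is present in the paper's own proof. You write $\mu(Q_z)\geq \mu(Q_x)+\mu(Q_y)-\mu(Q_w)\geq a+b-c$; the second inequality requires $\mu(Q_w)\leq c$, but the hypothesis $h_4\leq\lambda(c,t_4)$ delivers the opposite bound: it says $\mu(Q_{t_4,h})\geq c$ for all $h<h_4$, hence $\mu(\overline{Q}_{t_4,h_4})\geq c$. The two delicate points you flag (endpoint attainment of the sup, and nesting of the perturbed quadrants) are manageable; the direction of the $c$-inequality is not. Indeed, property \ref{condition 3} as stated is false: take $x=(0,2)$, $y=(1,3)$, and $\mu=2\delta_{(-1,4)}$, so that all four quadrants $Q_x,Q_y,Q_z,Q_w$ carry mass $2$; with $a=b=2$ and $c=1$ the three hypotheses hold, yet any $d$ with $h_3\leq\lambda(d,t_3)$ satisfies $d\leq 2<a+b-c=3$. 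What the premeasure construction actually needs is an \emph{upper} bound on the mass of $Q_w$, i.e.\ a hypothesis of the form $\lambda(c,t_4)\leq h_4$ (so that $c$ over-estimates $\mu(Q_w)$), together with some care about open versus closed quadrants; with the hypothesis as written, no argument can close this step, so the defect lies in the statement rather than in your overall strategy.
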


Unlike properties (1)-(3), property (4) involves both parameters $a$ and $t$.

\begin{proof}

    (1): $\forall \epsilon >0, \{h\geq 0\ : \ \mu(\Qth) \geq a+\epsilon\} \subset \{h\geq 0\ : \ \mu(\Qth) \geq a\}$, so $\lambda(a+\epsilon,t)\leq \lambda(a,t)$.

    (2):Fix $a>0$, $t,s\in \R$. w.l.o.g., suppose $\lambda(a,t)\geq\lambda(a,s)$. 

    \textit{Case 1}: If $\lambda(a,t)\leq |t-s|$, then $\lambda(a,t)-\lambda(a,s)\leq |t-s|$.

    \textit{Case 2}: Suppose $\lambda(a,t)>|t-s|$. Fix $h>0$ such that $|t-s|+h<\lambda(a,t)$, i.e. $\forall \epsilon>0 \exists h>0$ such that $\mu(\Qth)\leq \lambda(a,t)-\epsilon$. w.l.o.g., suppose $t>s$, then $t-\lambda(a,t)<s-h$ and $s+h<t+\lambda(a,t)$. Note $\mu(Q_{s,h-|t-s|})\geq a$. Then $\lambda(a,t)-\lambda(a,s)\leq |t-s|$.

    (3): the proof directly follows from definition of the continuous landscapes and that $\Qth$ are open quadrants.

    (4): Let $\mu$ denote the measure whose landscapes is $\lambda$. Note $\mu([x_1, z_1)\times (z_2, y_2] )\ge 0$ by the additivity of measures. Since \[
    \mu([x_1, z_1)\times (z_2, y_2] ) = \mu(Q_z) -\mu(Q_x) -\mu(Q_y) + \mu(Q_w),
    \]
    then $\mu(Q_z) \ge \mu(Q_x) + \mu(Q_y) - \mu(Q_w)$. Thus $\exists d$ such that $d\geq a+b-c$, since $\lambda(k,t)=\sup\{h > 0\ : \ \mu(\Qth) \geq k\}$ for all $k>0, t\in \R$.
\end{proof}

\section{Invertibility of Continuous Persistence Landscapes}


In this section we show that the mapping given by continuous persistence landscapes is injective, and furthermore that a $q$-tame measure can be reconstructed from its continuous persistence landscape.

\begin{theorem}\label{invertible}
    Let $S$ denote the set of functions satisfying proposition \ref{pl properties}. The vectorization via continuous persistence landscapes, denoted by $\Lambda$, is bijective onto $S$.
\end{theorem}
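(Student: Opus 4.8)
The plan is to establish bijectivity in two halves. Proposition \ref{pl properties} already shows that $\Lambda$ lands in $S$, so the remaining tasks are (a) injectivity, which really means producing an inverse, and (b) surjectivity onto $S$, i.e.\ showing every $\lambda \in S$ arises from a $q$-tame measure. I would handle both at once by constructing, for an arbitrary $\lambda \in S$, a candidate measure and checking it is $q$-tame with landscape $\lambda$, and that this construction inverts $\Lambda$ on the nose. The bridge between a landscape and a measure is the value $\mu(\Qth)$: from the definition $\lambda(a,t) = \sup\{h>0 : \mu(\Qth)\ge a\}$, left-continuity of $\lambda(-,t)$ (property (3)) lets me recover $\mu(\Qth)$ from $\lambda$ by the dual formula $\mu(\Qth) = \sup\{a>0 : \lambda(a,t)\ge h\}$ (with $\sup\emptyset = 0$). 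So the first step is to prove this inversion at the level of the ``rank on quadrants'' function $r(t,h) := \mu(\Qth)$: the two sup-formulas are mutually inverse because both $\lambda(-,t)$ and $r(t,-)$ are decreasing, left-continuous, and vanish at $\infty$ (a standard generalized-inverse lemma for monotone functions).

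Next I would define a premeasure on the algebra $\Sigma_0$ generated by the quadrants $\{\Qth\}$. Every element of $\Sigma_0$ is a finite disjoint union of ``generalized rectangles'' built from the half-open boxes $[x_1,z_1)\times(z_2,y_2]$ that appear in the proof of property (4); on such a box $B = [x_1,x_1')\times(x_2',x_2)$ one sets $\mu_0(B)$ to be the inclusion–exclusion alternating sum of the four corner values of $r$, exactly the identity $\mu(Q_z) - \mu(Q_x) - \mu(Q_y) + \mu(Q_w)$ used in Proposition \ref{pl properties}(4). The point of property (4) is precisely that this alternating sum is nonnegative — that is why condition \eqref{condition 3} was built into $S$ — so $\mu_0$ is a well-defined $[0,\infty]$-valued set function on boxes, and one extends it additively to $\Sigma_0$, checking consistency under refinement of box decompositions (routine). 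Then one verifies $\mu_0$ is a premeasure: finite additivity is by construction, and countable additivity (equivalently, continuity from above at $\emptyset$ for finite-weight sets) follows from the inner/outer regularity already observed for $q$-tame measures together with left-continuity of $\lambda$ in $a$ and $1$-Lipschitz continuity in $t$, which control how $r(t,h)$ varies as the box shrinks. Carathéodory's Extension Theorem (quoted in the excerpt) then extends $\mu_0$ to a Borel measure $\mu$ on $\B(\Rl)$, uniquely since $\Rl$ is $\sigma$-finite under $\mu_0$ (property (1) forces $r(t,h)<\infty$ for all $t,h$, and $\Rl$ is an increasing union of finite-weight boxes).

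It then remains to close the loop: the extended measure $\mu$ has $\mu(\Qth) = r(t,h)$ because $\Qth$ is an increasing union of the boxes used to define $\mu_0$ and $\mu_0$ is continuous from below; hence $\Lambda(\mu) = \lambda$ by the inversion lemma of the first step. Conversely, starting from a $q$-tame measure $\mu$, applying $\Lambda$ and then this construction returns $\mu$: the reconstructed premeasure agrees with $\mu$ on boxes by property (4)'s identity, and uniqueness in Carathéodory's theorem forces the reconstructed measure to equal $\mu$. This gives $\Lambda$ injective and surjective onto $S$.

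The main obstacle I expect is verifying that $\mu_0$ is genuinely a premeasure — specifically $\sigma$-additivity — rather than merely finitely additive: one must rule out ``mass escaping to the diagonal or to infinity'' as a countable disjoint union of boxes exhausts a larger box, and this is where the three analytic properties (1)–(3) of $S$ have to be used quantitatively, via a continuity-from-above argument on the monotone function $r$. A secondary subtlety is checking well-definedness of $\mu_0$ under different box decompositions of the same element of $\Sigma_0$, and confirming that the half-open box convention (closed on the birth-lower/death-upper sides, matching $[x_1,z_1)\times(z_2,y_2]$) makes these decompositions behave additively; this is bookkeeping but must be done carefully so that the inclusion–exclusion sums telescope.
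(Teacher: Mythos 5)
Your plan follows essentially the same route as the paper: recover $\mu(\Qth)$ from $\lambda$ by a generalized-inverse formula (the paper writes $\nu_0(\Qth)=\inf\{a\ge 0 : \lambda(a,t)\le h\}$, equivalent to your supremum formula since $\lambda(-,t)$ is decreasing and left continuous), extend to the algebra generated by quadrants via inclusion--exclusion on half-open boxes with property (4) supplying nonnegativity, prove the monotone-inverse lemma (the paper's Lemma \ref{useful lemma}), and apply Carath\'eodory with $\sigma$-finiteness for uniqueness. Your round-trip argument for injectivity is exactly the paper's.

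One caution on the step you yourself flag as the main obstacle. For the surjectivity half, you start from an arbitrary $\lambda\in S$ with no measure in hand, so you cannot justify countable additivity of $\mu_0$ by appealing to ``the inner/outer regularity already observed for $q$-tame measures'' --- that regularity is a property of a measure you have not yet constructed, and invoking it there is circular. The $\sigma$-additivity of the premeasure must be derived directly from properties (1)--(4) of $\lambda$ (your closing remark that (1)--(3) must be used ``quantitatively'' is the right instinct, but the argument still has to be supplied). For what it is worth, the paper's own proof establishes $\sigma$-additivity only by identifying $\nu_0$ with $\mu$ on the algebra, which likewise presupposes that $\lambda$ is already the landscape of some measure; so on the surjectivity direction your sketch is no less complete than the paper's, but neither closes that gap as written.
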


We first show the injectivity of $\Lambda$, as well as its inverse. We begin by constructing a set function on an algebra of subsets in $\Rl$ and verifying that it is a well-defined premeasure.

\subsection{Construction of the premeasure}

Let $\E=\{\Qth: t\in\R, h\in\R_{\ge 0}\}$, and $\langle \E \rangle$ is the algebra generated by the quadrants, which is the algebra of subsets in $\Rl$. Another generating set is \[ \{ [a,b) \times(c,d]\ : \ a,b,c,d\in \R, b\le c \}\] each of which can be written as $Q_z \setminus (Q_x \cup Q_y)$, as in figure \ref{fig: quadrants relation}. 
Define
\[ 
\begin{split}
    \nu_0: \langle \E \rangle & \rightarrow [0,\infty]\\
    \Qth &\mapsto \inf \{ a\ge 0\ : \ \lambda(a,t)\le h \},
\end{split}
\] 
Since intersections of quadrants are quadrants, we define $\nu_0$ for finite unions of quadrants via the Principle of Inclusion and Exclusion:\[ \nu_0 (Q_x \cup Q_y ) = \nu_0(Q_x) + \nu_0(Q_y) - \nu_0(Q_x\cap Q_y). \] Thus, \[ \nu_0([a,b) \times(c,d]) = \nu_0 (Q_z) - \nu_0(Q_x) - \nu_0(Q_y) + \nu_0(Q_x\cap Q_y). \] By Proposition \ref{pl properties} (4), this value is nonnegative. We define $\nu_0$ for unions and relative complements of rectangles via the Principle of Inclusion and Exclusion again.  

\subsection{Compatibility with the algebra} To show $\nu_0$ is a premeasure, we verify that it respects finite unions and relative complements of rectangles. 

\begin{lemma}\label{compatibility}
    $\nu_0$ is well-defined on $\langle \E \rangle$.
\end{lemma}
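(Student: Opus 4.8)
The goal is to show that $\nu_0$ is consistently defined on $\langle \E \rangle$, i.e. that the value assigned to an element of the algebra does not depend on how that element is written as a Boolean combination of quadrants. The plan is as follows. First I would reduce to a normal form: every element of $\langle \E \rangle$ can be written as a finite disjoint union of "generalized rectangles" of the form $[a,b)\times(c,d]$ with $b\le c$ (allowing $a=-\infty$, $d=+\infty$, corresponding to quadrants themselves). This is a purely combinatorial fact about the algebra generated by the quadrants $\Qth$, following from the observation that the quadrants form a lattice under intersection and that their complements are finite unions of such rectangles. So it suffices to check that the two ways $\nu_0$ is specified — directly on quadrants via $\inf\{a\ge 0 : \lambda(a,t)\le h\}$, and on rectangles via inclusion–exclusion — are compatible, and that finite additivity holds for disjoint unions of rectangles.

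The key computational step is to identify $\nu_0(\Qth)$ with the "mass" $\mu(\Qth)$ of the (not-yet-constructed) measure whose landscape is $\lambda$. Concretely, by Proposition~\ref{pl properties}(1) the function $a\mapsto \lambda(a,t)$ is decreasing with limit $0$, and by (3) it is left continuous; an elementary real-analysis argument then shows that $\inf\{a\ge 0 : \lambda(a,t)\le h\} = \sup\{a > 0 : \lambda(a,t) > h\}$ and, crucially, that this value $a^\ast$ satisfies $\lambda(a^\ast, t)\le h$ while $\lambda(a,t) > h$ for all $a < a^\ast$ — i.e. $\nu_0$ and $\lambda(-,t)$ are "generalized inverses" of each other as monotone functions. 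This is the bookkeeping that makes the definition of $\nu_0$ on quadrants unambiguous (it depends only on $\Qth$ as a set, since $t$ and $h$ are recovered from the quadrant). Then, for the rectangle $R = [x_1,z_1)\times(z_2,y_2] = Q_z\setminus(Q_x\cup Q_y)$ with $Q_w = Q_x\cap Q_y$, the inclusion–exclusion value $\nu_0(Q_z)-\nu_0(Q_x)-\nu_0(Q_y)+\nu_0(Q_w)$ is nonnegative by Proposition~\ref{pl properties}(4) — this is precisely what property (4)/\eqref{condition 3} was engineered to give — so $\nu_0(R)\ge 0$, and one checks that if $R$ is further subdivided (say by a horizontal or vertical line, corresponding to refining one of the quadrant parameters) the values add up, again by a two-term inclusion–exclusion identity among quadrants.

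Having established additivity on rectangles, well-definedness on all of $\langle \E \rangle$ follows by a standard argument: given two representations of the same set $E$ as finite disjoint unions of rectangles, pass to a common refinement (the pairwise intersections, which are again rectangles, together with leftover pieces), and use additivity along each representation to see both give $\sum$ over the common refinement. I would also record that $\nu_0(\emptyset)=0$ (take any $Q_z$ with $Q_x = Q_y = Q_z$, or note $\lambda(a,t)\le h$ for $a$ large) and that the values are finite on bounded rectangles — finiteness reduces to $\nu_0(\Qth) = \inf\{a : \lambda(a,t)\le h\} < \infty$, which holds because $\lambda(a,t)\to 0$ as $a\to\infty$, in particular $\lambda(a,t)\le h$ for some $a$.

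The main obstacle I anticipate is not any single hard inequality but the careful handling of the monotone-inverse relationship between $\nu_0$ and $\lambda(-,t)$ together with boundary cases: half-infinite rectangles (where one factor is a quadrant), degenerate rectangles where $b=c$ (which should get mass $0$), and making sure the inclusion–exclusion formula is invariant under all the ways a rectangle can be cut by lines that do or do not pass through the "corner" parameters $t_3,t_4$ of the associated quadrants. Property (4) is exactly the ingredient guaranteeing nonnegativity, so the real work is organizing the finite-additivity verification cleanly rather than discovering a new idea; I would structure it as (i) inverse relation $\nu_0 \leftrightarrow \lambda$, (ii) additivity for a rectangle split into two, (iii) common-refinement argument for general well-definedness.
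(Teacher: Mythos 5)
Your proposal is correct and follows the same basic strategy as the paper (reduce everything to half-open rectangles $[a,b)\times(c,d]$ and use the inclusion--exclusion definition), but you carry it substantially further than the paper does. The paper's own proof is two sentences: relative complements of rectangles are finite disjoint unions of rectangles, and ``unions of rectangles are rectangles'' --- the latter being true only for special configurations (nested or aligned rectangles), not in general. Your common-refinement argument --- writing two representations of the same set $E$ as disjoint unions of rectangles, intersecting them to get a common refinement, and using additivity of the inclusion--exclusion values along each representation --- is precisely the standard semiring argument that makes the general case rigorous, and it is the step the paper elides. Your step (ii), additivity when a rectangle is cut by a horizontal or vertical line, reduces to a telescoping identity among the four quadrant values and is where the actual computation lives; your step (i), the generalized-inverse relation between $\nu_0(\Qth)$ and $\lambda(-,t)$, is strictly speaking not needed for well-definedness on quadrants (a quadrant determines $(t,h)$ uniquely, so the formula is unambiguous as written) --- the paper defers that identification to the proof of Theorem~\ref{invertible} via Lemma~\ref{useful lemma}, and you could do the same to keep this lemma self-contained. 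Nonnegativity via Proposition~\ref{pl properties}(4) is used identically in both arguments. In short: same skeleton, but your version supplies the finite-additivity and refinement bookkeeping that the paper's proof takes for granted.
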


\begin{proof}
    In the proof, we refer rectangles to the ones of the form $[a,b) \times(c,d]$.
    
    It suffices to show that unions and relative complements of rectangles are well-defined. 

    Observe that relative complements of rectangles can be expressed as finite unions of disjoint rectangles if non-empty. By the construction of $\nu_0$ via the inclusion-exclusion formula, $\nu_0$ is well-defined for relative complements. 

    Similarly, unions of rectangles are rectangles and have well-defined $\nu_0$ value. 
\end{proof}

\subsection{Extension to a measure}

We extend the premeasure to a measure via Carathéodory's extension theorem, and the extended measure recovers the $q-$tame measure. 

First we state a technical lemma.
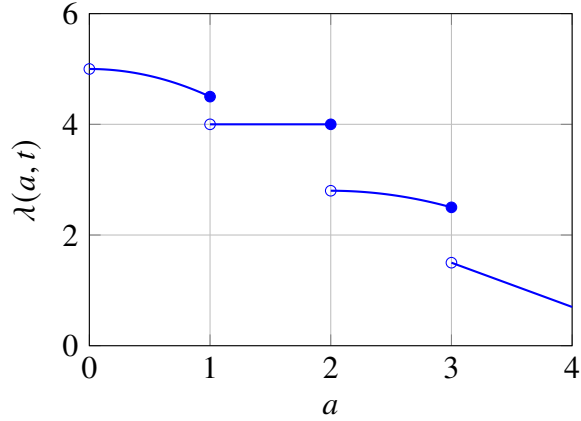
\begin{figure}[h]
\centering
\begin{tikzpicture}
\begin{axis}[
    width=8cm,
    height=6cm,
    xlabel={$a$},
    ylabel={$\lambda(a,t)$},
    xmin=0, xmax=4,
    ymin=0, ymax=6,
    grid=both,
]

\addplot[blue, thick, domain=0:1] {5 - 0.5*x^2};

\addplot[only marks, mark=o, blue] coordinates {(0,5)};
\addplot[only marks, mark=*, blue] coordinates {(1,5 - 0.5)};
\addplot[only marks, mark=o, blue] coordinates {(1,4)};

\addplot[blue, thick, domain=1:2] {4};

\addplot[only marks, mark=*, blue] coordinates {(2,4)};
\addplot[only marks, mark=o, blue] coordinates {(2,2.8)};

\addplot[blue, thick, domain=2:3] {2.8 - 0.3*(x-2)^2};

\addplot[only marks, mark=*, blue] coordinates {(3,2.5)};
\addplot[only marks, mark=o, blue] coordinates {(3,1.5)};

\addplot[blue, thick, domain=3:4] {1.5 - 0.8*(x-3)};

\end{axis}
\end{tikzpicture}
\caption{Fix $t\in \R$. We plot an example of the continuous persistence landscape $\lambda(a,t)$ varying $a$ and show it is a left-continuous generalized inverse of the function $\mu(\Qth)$, where $\mu$ is the $q-$tame measure whose continuous landscape is $\lambda$. For discrete landscapes, $\lambda(a,t)$ is a decreasing left-continuous step function, constant on $(n,n+1]$ for $n\in \N$.}
\end{figure}

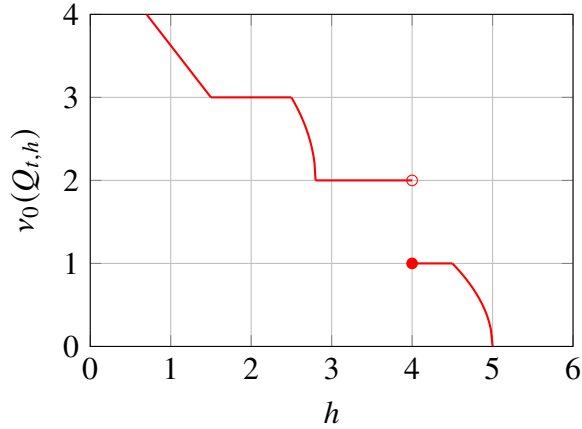
\begin{figure}[h]
\centering
\begin{tikzpicture}
\begin{axis}[
    width=8cm,
    height=6cm,
    xlabel={$h$},
    ylabel={$\nu_0(\Qth)$},
    xmin=0, xmax=6,
    ymin=0, ymax=4,
    grid=both       
]


\addplot[red, thick, domain=0:1] {5 - 0.5*x^2};


\addplot[red, thick, domain=0.7:1.5] {3 + (1.5-x)/0.8};

\addplot[red, thick, domain=1.5:2.5] {3};


\addplot[red, thick, domain=2.5:2.8] {(2.8-x)/0.3)^0.5+2 };

\addplot[only marks, mark=o, red] coordinates {(4,2)};
\addplot[only marks, mark=*, red] coordinates {(4,1)};

\addplot[red, thick, domain=2.8:4] {2};

\addplot[red, thick, domain=4:4.5] {1};

\addplot[red, thick, domain=4.5:5] {(2*(5-x))^0.5};


\end{axis}
\end{tikzpicture}
\caption{Fix $t\in \R$. We plot $\nu_0(\Qth)$, the right-continuous generalized inverse of $\lambda(a,t)$, which is a decreasing function with respect to $h$. We will show $\nu_0 = \mu$ on $\langle \E \rangle$. For discrete landscapes, $\nu_0(\Qth)$ is a decreasing right-continuous step function. }
\end{figure}

\begin{lemma} \label{useful lemma}
    Let $f$ be non-negative, decreasing, and right continuous. For each x, define $P_x(y)$ holds if \[
    \sup \{ x'\ : \ f(x') \ge y \} \leq x.
    \]
    Then $f(x) = \inf \{y\ : \ P_x(y)\}$.
\end{lemma}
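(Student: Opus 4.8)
The plan is to establish the equality $f(x) = \inf\{y : P_x(y)\}$ by showing the two quantities bound each other. Throughout, fix $x$ and write $g(x) = \sup\{x' : f(x') \ge y\}$ implicitly; the predicate $P_x(y)$ says exactly that this supremum is $\le x$. The core observation is that, because $f$ is decreasing, the superlevel set $\{x' : f(x') \ge y\}$ is a "left" set (an interval unbounded to the left, possibly empty), so its supremum is the natural generalized inverse of $f$ at level $y$; the lemma is then the standard duality between the left-continuous and right-continuous generalized inverses of a monotone function, specialized to the right-continuous $f$.

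First I would prove $f(x) \le \inf\{y : P_x(y)\}$, equivalently that $P_x(y)$ implies $y \le f(x)$ — wait, more carefully: I want to show every $y$ with $P_x(y)$ satisfies $y \ge$ something, so let me instead show $\inf\{y : P_x(y)\} \le f(x)$ and $f(x) \le \inf\{y : P_x(y)\}$ separately. For the inequality $\inf\{y : P_x(y)\} \le f(x)$: I claim that for every $\varepsilon > 0$, the value $y = f(x) + \varepsilon$ satisfies $P_x(y)$. Indeed, if $f(x') \ge f(x) + \varepsilon > f(x)$, then since $f$ is decreasing we cannot have $x' \ge x$ with... actually $f$ decreasing gives $x' \le x$ only when $f(x') \ge f(x)$ is not automatic at equality, but here $f(x') > f(x)$ forces $x' < x$ (if $x' \ge x$ then $f(x') \le f(x)$, contradiction). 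Hence $\{x' : f(x') \ge f(x)+\varepsilon\} \subseteq (-\infty, x)$, so its supremum is $\le x$, i.e. $P_x(f(x)+\varepsilon)$ holds. Letting $\varepsilon \to 0$ gives $\inf\{y : P_x(y)\} \le f(x)$.

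For the reverse inequality $f(x) \le \inf\{y : P_x(y)\}$: I must show that whenever $P_x(y)$ holds, $y \ge f(x)$. Suppose toward a contradiction that $P_x(y)$ holds but $y < f(x)$. Then $x$ itself lies in $\{x' : f(x') \ge y\}$ (since $f(x) > y \ge y$... we need $f(x) \ge y$, which holds as $f(x) > y$), but actually I want points strictly beyond $x$: here is where right-continuity enters. Since $f$ is right continuous at $x$ and $f(x) > y$, there exists $\delta > 0$ with $f(x') > y$ for all $x' \in [x, x+\delta]$, hence $x + \delta \in \{x' : f(x') \ge y\}$, so $\sup\{x' : f(x') \ge y\} \ge x + \delta > x$, contradicting $P_x(y)$. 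Therefore $y \ge f(x)$ for all such $y$, giving $f(x) \le \inf\{y : P_x(y)\}$, and combined with the previous paragraph, equality. The main obstacle — really the only subtle point — is correctly invoking right-continuity in this second half and handling the edge cases (when the superlevel set is empty, $\sup\emptyset$ should be read as $-\infty$ so $P_x(y)$ holds trivially and consistency is maintained; when $\inf$ ranges over the empty set of $y$'s it is $+\infty$, which cannot happen here since large $y$ always satisfies $P_x(y)$ as shown). I would state these conventions explicitly at the start of the proof to keep it clean.
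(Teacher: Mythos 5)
Your proof is correct and follows the same basic strategy as the paper's: the lower bound $\inf\{y : P_x(y)\} \geq f(x)$ is obtained in both by showing that $y < f(x)$ violates $P_x(y)$, using right-continuity to find a point strictly to the right of $x$ still in the superlevel set. The noteworthy difference is in the other direction. The paper asserts that $P_x(f(x))$ ``holds by construction,'' i.e.\ that $f(x)$ itself belongs to $\{y : P_x(y)\}$; this is actually false in general. If $f$ is constant on an interval $[x, x+\delta]$ with $\delta>0$, then $\{x' : f(x') \geq f(x)\}$ contains $x+\delta$, so its supremum exceeds $x$ and $P_x(f(x))$ fails --- the infimum in the lemma is not attained in that case. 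Your argument avoids this by showing instead that $P_x(f(x)+\varepsilon)$ holds for every $\varepsilon>0$ (since $f(x')\geq f(x)+\varepsilon$ forces $x'<x$ by monotonicity), which yields $\inf\{y : P_x(y)\} \leq f(x)$ without any attainment claim. So your proof is not merely equivalent; it repairs a small but genuine gap in the published argument. The explicit conventions for $\sup\emptyset$ and the nonemptiness of $\{y: P_x(y)\}$ are also a welcome addition, though they are not strictly necessary for the result.
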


\begin{proof}
    First $P_x(f(x))$ holds by construction. To show $f(x)$ is the greatest lower bound, fix $y< f(x)$. 
    
    $\sup\{x'\ : \ f(x')\geq y\} \geq x$ because $f(x)>y$. By right continuity, there exists $\epsilon>0$ such that $y\leq f(x+\epsilon)\leq f(x)$. Then sup$\{x'\ : \ f(x')\geq y\} \geq x+e > x$, i.e. $y$ doesn't satisfy $P_x$. 
\end{proof}

Now we are ready to prove the main theorem \ref{invertible}. We will show the $\sigma-$additivity of $\nu_0$ and invertibility of the continuous landscapes at the same time, by showing $\nu_0$ coincides with $\mu$ on $\langle \E \rangle$, the algebra of subsets in $\Rl$. Then by Carathéodory's extension theorem, the premeasure $\nu_0$ extends to a measure on the $\sigma-$algebra of $\Rl$, uniquely up to sets of measure 0.

\begin{proof}
    Let $t\in \R$. Since $\mu(\Qth)$ is non-negative, decreasing, and right continuous with respect to $h$, we get: \[
    \begin{split}
        \nu_0(\Qth) &= \inf \{a\geq 0\ : \ \lambda(a,t) \le h\} \\
                    &= \inf \{a\geq 0\ : \ \sup\{h > 0\ : \ \mu(\Qth) \geq a\} \leq h\}\\
                    &\overset{\ref{useful lemma}}{=} \mu(\Qth)
    \end{split}
    \]
    Then $\nu_0$ is a premeasure because it is $\sigma-$additive and assigns 0 to the empty set. By Carathéodory's extension theorem, we can extend $\nu_0$ from the algebra generated by $\E$ to the $\sigma-$algebra generated by $\E$. Because $\Rl$ is $\sigma-$finite, the extended measure is unique. Since $\nu_0$ and $\mu$ agree on the generating set, the extension recovers $\mu$.
\end{proof}

\begin{remark}
    One could consider expanding the construction for signed $q-$tame measures. The construction relies on the landscapes' properties in proposition \ref{pl properties}, and it works if one is given the Jordan decomposition of a signed $q-$tame measure. 
\end{remark}

\section{Stability} \label{stability section}

We show vectorization using continuous landscapes is $L^1-$stable with respect to the 1-Wasserstein distance on persistence measures together with a specific choice of the underlying metric introduced in \cite{BZ}.

Recall the definition of $1-$Wasserstein distance on persistence measures: \[
 W_{1}^{d}(\mu, \nu) = \underset{\gamma}{\inf} \int d(x,y) d\gamma
\]
where $d$ is a metric on $\Rleq$. Let $\Rpt$ denote the pointed topological space $(\Rleq/\Delta, \Delta)$ equipped with the quotient topology. 
Let $q: \Rleq \to \Rleq/\Delta$ be the quotient map. Note that the pullback of a (pseudo-)metric $d$ on $\Rleq / \Delta$ under $q$ is a pseudo-metric. The metric $d$ can be viewed as a cost function for transporting mass from the point $x$ to $y$. There is no canonical choice of $d$. We choose the following metric to prove $W_1-$stability of our vector summaries for $q-$tame measures.

Let $x=(x_1, x_2), y=(y_1, y_2) \in \Rleq$. The metric is given by 
\[
\drank(x,y) = \frac{1}{2} (x_2 - x_1)^2 + \frac{1}{2} (y_2 - y_1)^2 - \max(\min(x_2,y_2) - \max(x_1,y_1),0)^2 .
\]
Geometrically, $\drank(x,y)$ is the area of the symmetric difference between the 2 right triangles $x,y$ makes with the diagonal. The expression includes the minimum and maximum to account for different configurations of $x,y$. 

Intuitively, $\drank$ considers the persistence of diagram points instead of just their relative position on the plane. More persistent points are associated with higher costs to move around. Let $\Wrk$ denote the 1-Waaserstein distance together with the metric $\drank$.

\begin{theorem} \label{W1rank stability}
    \textbf{(\cite{BZ} Theorem 5.9)} Let $\mu,\nu$ be countably supported persistence diagrams such that $\Wrk(\mu,0)$, $\Wrk(\nu,0)$ are finite. Then \[
    \Vert \Lambda \mu - \Lambda \nu \Vert_1 \le \frac{1}{2}\Wrk(\mu,\nu)
    \]
\end{theorem}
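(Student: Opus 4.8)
The strategy is to reduce the whole estimate to one elementary fact — that passing to a generalized inverse preserves $L^1$‑distance — applied for each fixed $t$, followed by a triangle‑inequality argument along a near‑optimal partial matching.

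First I would record the following lemma: if $f,g\colon[0,\infty)\to[0,\infty]$ are non‑increasing and $f^{\leftarrow}(a):=\sup\{h\ge 0:\ f(h)\ge a\}$ (with $\sup\emptyset=0$), then
\[
\int_0^\infty \bigl|f^{\leftarrow}(a)-g^{\leftarrow}(a)\bigr|\,da \;=\; \int_0^\infty \bigl|f(h)-g(h)\bigr|\,dh ,
\]
since both sides equal the planar Lebesgue measure of the symmetric difference of the subgraphs $\{(h,a):0<a\le f(h)\}$ and $\{(h,a):0<a\le g(h)\}$ — slicing in $h$ gives the right side (Tonelli), and slicing in $a$ gives the left side, because superlevel sets of a monotone function are nested half‑lines whose lengths are exactly the values of the generalized inverse. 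I would apply this with $f(h)=\mu(\Qth)$ and $g(h)=\nu(\Qth)$: these are non‑negative and non‑increasing in $h$ (as $h\mapsto\Qth$ is decreasing), finite by $q$‑tameness, and their generalized inverses are $\lambda_\mu(-,t)$ and $\lambda_\nu(-,t)$ by Definition \ref{cpl def}. Integrating the resulting identity over $t\in\R$ (Tonelli again) yields
\[
\Vert \Lambda\mu-\Lambda\nu\Vert_1 \;=\; \int_{\R}\int_0^\infty \bigl|\mu(\Qth)-\nu(\Qth)\bigr|\,dh\,dt .
\]

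The second step is geometric. For $x=(x_1,x_2)\in\Rleq$ put $U_x=\{(t,h):\ h>0,\ x\in\Qth\}$. The linear map $\Phi(t,h)=(t-h,\,t+h)$ has $|\det D\Phi|=2$ and carries $U_x$ bijectively onto the interior of the triangle $T_x$ that $x$ spans with the diagonal, whose area is $\tfrac12(x_2-x_1)^2$; hence $\mathrm{area}(U_x\triangle U_y)=\tfrac12\,\mathrm{area}(T_x\triangle T_y)=\tfrac12\,\drank(x,y)$ for all $x,y$, the last equality being the stated description of $\drank$ as the area of the symmetric difference of the two diagonal triangles (with $T_y$ of area $0$ when $y\in\Delta$, which covers transport to the diagonal). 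In particular the case $\nu=0$ of the identity above already gives $\Vert\Lambda\mu\Vert_1=\tfrac12\Wrk(\mu,0)$, so the finiteness hypotheses are exactly what place $\Lambda\mu,\Lambda\nu$ in $L^1$.

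Finally, fix $\varepsilon>0$ and pick a partial matching between the supports of $\mu$ and $\nu$ — pairing $x_i\leftrightarrow y_{\sigma(i)}$ for $i$ in some countable index set and sending each unmatched point to the diagonal — of total $\drank$‑cost at most $\Wrk(\mu,\nu)+\varepsilon$; for countably supported diagrams such a matching realizes $\Wrk$ up to $\varepsilon$. Expanding $\mu-\nu$ as the corresponding countable signed sum of terms $\delta_{x_i}-\delta_{y_{\sigma(i)}}$, $\delta_{x_j}$, $-\delta_{y_k}$, evaluating on $\Qth$, applying the triangle inequality pointwise in $(t,h)$, and interchanging the sum with the double integral (Tonelli), I bound $\Vert\Lambda\mu-\Lambda\nu\Vert_1$ by one‑half the total $\drank$‑cost of the matching, namely $\tfrac12\bigl(\sum_i\drank(x_i,y_{\sigma(i)})+\sum_j\drank(x_j,\Delta)+\sum_k\drank(y_k,\Delta)\bigr)\le\tfrac12\bigl(\Wrk(\mu,\nu)+\varepsilon\bigr)$, and letting $\varepsilon\downarrow 0$ finishes the proof. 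The step I expect to need the most care is this last reduction from couplings to partial matchings: one must invoke that an optimal (or near‑optimal) coupling of the two integer‑valued discrete measures can be chosen to be supported on the graph of a partial matching (integrality of optimal transport between integer measures), so that the telescoping along $\sigma$ is legitimate. Everything else is bookkeeping with Tonelli's theorem and the single change of variables $\Phi$, and no properties of $\lambda$ beyond its defining formula in Definition \ref{cpl def} are needed.
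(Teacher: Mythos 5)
The paper does not actually prove this statement: it is imported verbatim as Theorem 5.9 of \cite{BZ}, so there is no in-paper argument to compare against, and your proposal must be judged on its own. Taken on its own it is correct and essentially complete. The two key identities both check out: the equality $\Vert\Lambda\mu-\Lambda\nu\Vert_1=\int_{\R}\int_0^\infty|\mu(\Qth)-\nu(\Qth)|\,dh\,dt$ (slicing the symmetric difference of the two subgraphs in the $h$-direction and in the $a$-direction, legitimate because $h\mapsto\mu(\Qth)$ is non-increasing and finite for $h>0$), and $\operatorname{area}(U_x\triangle U_y)=\tfrac12\,\drank(x,y)$ via the change of variables $(t,h)\mapsto(t-h,t+h)$ with Jacobian $2$, including the degenerate case $y\in\Delta$ where $U_y=\emptyset$ and $\drank(x,y)=\tfrac12(x_2-x_1)^2$. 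The one step that leans on an external fact is the reduction from couplings to partial matchings, which you correctly flag as delicate: matchings form a strict subclass of the couplings over which $\Wrk$ is defined in Definition \ref{wp def for measures}, so bounding $\Vert\Lambda\mu-\Lambda\nu\Vert_1$ by half the cost of a near-optimal \emph{matching} only yields the theorem if the matching infimum equals the coupling infimum; for countably supported integer-valued measures that equality is true but is itself a nontrivial theorem of discrete optimal transport. You can sidestep it entirely: for an arbitrary coupling $\gamma$ one has $\mu(\Qth)-\nu(\Qth)=\int\bigl(\mathbf{1}_{U_x}-\mathbf{1}_{U_y}\bigr)(t,h)\,d\gamma(x,y)$ by the marginal conditions, so the pointwise triangle inequality and Tonelli give $\Vert\Lambda\mu-\Lambda\nu\Vert_1\le\int\operatorname{area}(U_x\triangle U_y)\,d\gamma=\tfrac12\int\drank\,d\gamma$ directly, and taking the infimum over $\gamma$ finishes the proof with no integrality input. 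With that modification your argument is a clean, self-contained proof of the cited result.
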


To extend this stability result, we consider $q-$tame measures with countable support and finite $\Wrk$ distance to the empty diagram. These measures can each be approximated by finitely supported $q-$tame measures, $\mu',\nu'$. By denseness of $\Q$ in $\R$, we can choose $\mu',\nu'$ to have rational weights on all points in the support: \[
\mu' = \sum_{i\le m} a_i x_i \qquad \nu' = \sum_{i\le n} b_i y_i \] such that \[ \Wrk(\mu,\mu') < \epsilon \Wrk(\nu,\nu') < \epsilon
\] for any $\epsilon >0$. After multiplying by the common denominator of elements in $\{a_i\}_{i\le m}\cup \{b_i\}_{i\le n}$, the case reduces to comparing 2 finite persistence diagrams. 


\begin{theorem}
    Let $\mu,\nu$ be countably supported  $q-$tame measure such that $\Wrk(\mu,0), \Wrk(\nu,0)$ are finite. Then \[
    \Vert \Lambda \mu - \Lambda \nu \Vert_1 \le \frac{1}{2} \Wrk(\mu,\nu)
    \]
\end{theorem}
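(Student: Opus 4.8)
The plan is to reduce the statement, in two steps, to the case of two finite persistence diagrams with integer multiplicities, which is Theorem~\ref{W1rank stability}, and then to recover the countably-supported case by a limiting argument. First I would approximate. Since $\mu$ is countably supported, write $\mu=\sum_{i\ge1}c_ix_i$ with $x_i\in\Rl$ and $c_i>0$; the hypothesis $\Wrk(\mu,0)<\infty$ says exactly that $\Wrk(\mu,0)=\tfrac12\sum_ic_i(x_{i,2}-x_{i,1})^2<\infty$, since the optimal coupling to $0$ transports each atom to the diagonal at cost $\tfrac12(x_{i,2}-x_{i,1})^2$ per unit mass. Given $\epsilon>0$, I would truncate to a finite index set so the discarded mass contributes less than $\epsilon/2$ to that series, then round the surviving weights down to rationals $a_i$ with additional loss below $\epsilon/2$. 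The resulting finitely supported $q$-tame measure $\mu'=\sum_{i\le m}a_ix_i$ satisfies $\mu'\le\mu$ and $\Wrk(\mu,\mu')<\epsilon$, witnessed by the coupling that is the identity on $\mu'$ and sends $\mu-\mu'$ to the diagonal; do the same for $\nu$ to obtain $\nu'$.

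The second reduction is that the desired inequality is invariant under scaling both measures by a positive integer $k$. Indeed $(k\mu)(\Qth)\ge a\iff\mu(\Qth)\ge a/k$, so $(\Lambda(k\mu))(a,t)=(\Lambda\mu)(a/k,t)$, and the substitution $a\mapsto a/k$ gives $\Vert\Lambda(k\mu)-\Lambda(k\nu)\Vert_1=k\Vert\Lambda\mu-\Lambda\nu\Vert_1$; likewise $\gamma\mapsto k\gamma$ is a bijection on couplings, so $\Wrk(k\mu,k\nu)=k\Wrk(\mu,\nu)$. Choosing $k$ to be a common denominator of the $a_i$ and $b_i$, the measures $k\mu'$, $k\nu'$ are finitely supported integer-weighted persistence diagrams with finite $\Wrk$-distance to $0$, so Theorem~\ref{W1rank stability} gives $\Vert\Lambda(k\mu')-\Lambda(k\nu')\Vert_1\le\tfrac12\Wrk(k\mu',k\nu')$, and dividing by $k$ yields $\Vert\Lambda\mu'-\Lambda\nu'\Vert_1\le\tfrac12\Wrk(\mu',\nu')$. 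Combined with $\Wrk(\mu',\nu')\le\Wrk(\mu,\nu)+\Wrk(\mu,\mu')+\Wrk(\nu,\nu')<\Wrk(\mu,\nu)+2\epsilon$, this gives $\Vert\Lambda\mu'-\Lambda\nu'\Vert_1\le\tfrac12\Wrk(\mu,\nu)+\epsilon$.

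To conclude, take the approximants $\mu^{(N)},\nu^{(N)}$ above with $\epsilon=1/N$, so $\Vert\Lambda\mu^{(N)}-\Lambda\nu^{(N)}\Vert_1\le\tfrac12\Wrk(\mu,\nu)+1/N$; it then suffices to prove $\Vert\Lambda\mu-\Lambda\nu\Vert_1\le\liminf_N\Vert\Lambda\mu^{(N)}-\Lambda\nu^{(N)}\Vert_1$, which by Fatou's lemma on $(0,\infty)\times\R$ follows once $\Lambda\mu^{(N)}\to\Lambda\mu$ and $\Lambda\nu^{(N)}\to\Lambda\nu$ pointwise Lebesgue-a.e. I would get this in two steps. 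Since $\drank$ is a genuine metric on $\Rl$ that is bounded below on any set at positive distance from $\Delta$, $\Wrk$-convergence on $\Rl$ forces weak convergence of the underlying measures; by the portmanteau theorem $\mu^{(N)}(\Qth)\to\mu(\Qth)$ whenever $\mu(\partial\Qth)=0$, and for each fixed $t$ this excludes only the countably many $h$ with $t-h$ or $t+h$ on one of the at most countably many lines of positive $\mu$-measure. Since $(\Lambda\mu)(\cdot,t)$ is a left-continuous generalized inverse of the non-increasing right-continuous map $h\mapsto\mu(\Qth)$ (Lemma~\ref{useful lemma}), convergence of these maps on a dense set of $h$ propagates to convergence of the generalized inverses at every continuity point of $(\Lambda\mu)(\cdot,t)$, hence at all but countably many $a$; Fubini then upgrades this to a.e. convergence on $(0,\infty)\times\R$.

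The first two reductions are routine; the main obstacle is the last step, converting $\Wrk$-smallness into $L^1$-closeness of the landscapes. It rests on two standard but nontrivial facts: that Wasserstein convergence for this metric (degenerate on $\Delta$) is weak convergence on $\Rl$, and that generalized inverses converge off their countable jump sets. An alternative that sidesteps the portmanteau step is to take the approximants increasing, $\mu^{(N)}\uparrow\mu$ (round weights down along an exhaustion of the support), so $\mu^{(N)}(\Qth)\uparrow\mu(\Qth)$ by continuity from below and $(\Lambda\mu^{(N)})(\cdot,t)\uparrow(\Lambda\mu)(\cdot,t)$ off the countable jump set of $(\Lambda\mu)(\cdot,t)$; monotone convergence then replaces Fatou, provided one first checks $\Vert\Lambda\mu-\Lambda0\Vert_1\le\tfrac12\Wrk(\mu,0)<\infty$ by the same device.
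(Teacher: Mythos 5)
Your proposal is correct and follows essentially the same route as the paper: approximate each countably supported measure by a finitely supported one with rational weights to within $\epsilon$ in $\Wrk$, clear denominators using the scaling identities $\Wrk(c\mu',c\nu')=c\,\Wrk(\mu',\nu')$ and $\Lambda(c\mu')(a,t)=\Lambda(\mu')(a/c,t)$, and invoke Theorem~\ref{W1rank stability}. The one place you go beyond the paper is the final passage to the limit (Fatou, or the monotone variant with approximants increasing to $\mu$ so that $\mu^{(N)}(\Qth)\uparrow\mu(\Qth)$ and the landscapes converge off a countable set of $a$ for each $t$); the paper leaves this step implicit, and your argument for it is sound.
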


The proof follows from the proof for Theorem 5.9 in \cite{BZ} with the following observation. Let $c$ be the common denominator for the weights of points in the support of $\mu',\nu'$, \[
\Wrk(c\mu',c\nu') = c \Wrk(\mu',\nu')
\] The multiplication by $c$ scales the landscapes by $\frac{1}{c}$ for the parameter $a$. It follows from the continuous persistence landscapes definition \ref{cpl def} that \[
\Lambda(c\mu')(a,t) = \Lambda(\mu')(\frac{a}{c},t)
\].

\section{Comparison with average persistence landscapes}


Embeddings into Hilbert spaces do not preserve the metric space structure of PDs \cite{BW2020}. One natural approach to vectorize the estimation of a persistence measure is to draw PD samples from the probability measure on $\Rl$, vectorize each PD and take the mean. Chazal, Divol, Lacombe showed the empirical counterpart of a persistence measure is an optimal estimator from a minimax viewpoint \cite{Divol_Chazal_2020, Divol-Lacombe2021}. In this section, we investigate the relationship between the continuous persistence landscape of the empirical counterpart and the average persistence landscape of the samples, which satisfies the law of large numbers and central limit theorem.

Consider PDs $\{\alpha_i\}_{i=1}^{N}$ and persistence landscapes $\{\lambda_i\}_{i=1}^{N}$. Let $APL$ be the average persistence landscape. Consider the persistence measure by taking the empirical mean of the points, denoted $\mu = \frac{1}{N}\sum_i \alpha_i$. Note each $\alpha_i$ is viewed a sum of dirac delta measures. Let $\lambda$ be the continuous persistence landscapes of $\mu$. Since the average persistence landscapes are functions on $\Z_+ \times \R$, we restrict the continuous landscapes to the same domain and compare their function values. 

Recall $\Qth$ is the open quadrant $(-\infty, t-h)\times(t+h,\infty)$ and $\cQth$ is the closed quadrant $(-\infty, t-h]\times[t+h,\infty)$.

\begin{proposition}\label{apl and cpl}
    Fix $k\in \Z_+$.
    If for each $i$, $ \alpha_i(\cQth) $ contains exactly $k$ distinct points, then $ \lambda(k,t) \leq APL(k,t)$. 
\end{proposition}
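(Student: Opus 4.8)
The plan is to compare the two landscapes layer by layer at a fixed level $k$ and fixed $t$, reducing the inequality to a statement about a single persistence landscape $\lambda_i$ at $t$ and then averaging. First I would unpack the definition of the continuous persistence landscape: $\lambda(k,t) = \sup\{h>0 : \mu(\Qth) \ge k\}$, where $\mu = \frac1N\sum_i \alpha_i$, so that $\mu(\Qth) = \frac1N\sum_i \alpha_i(\Qth)$. The key observation to extract is that the hypothesis ``$\alpha_i(\cQth)$ contains exactly $k$ distinct points'' controls $\alpha_i$ on the nearby \emph{open} quadrants: for $h' > h$ we have $\Qth[t,h'] \subset \cQth$, hence $\alpha_i(\Qth[t,h']) \le \alpha_i(\cQth) = k$ (counting with multiplicity — here one must be slightly careful, since ``exactly $k$ distinct points'' in $\cQth$ should be read together with the discreteness of $\alpha_i$, so that the total mass of $\alpha_i$ on $\cQth$ is also $k$; I would state this normalization explicitly). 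Consequently $\mu(\Qth[t,h']) \le k$ for all $h' > h$, which forces $\lambda(k,t) \le h$.

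Next I would reinterpret the right-hand side. For the ordinary persistence landscape of $\alpha_i$ one has $\lambda_i(k,t) = \sup\{h'>0 : \rk \text{ (or equivalently } \alpha_i(\Qth[t,h']) ) \ge k\}$; the hypothesis that $\alpha_i(\cQth)$ has exactly $k$ distinct points means that the $k$-th layer of $\lambda_i$ at $t$ is at least $h$, i.e. $\lambda_i(k,t) \ge h$ for every $i$. Averaging over $i$ gives $APL(k,t) = \frac1N\sum_i \lambda_i(k,t) \ge h$. Combining the two halves, $\lambda(k,t) \le h \le APL(k,t)$, which is the claim. The role of the closed quadrant $\cQth$ versus the open quadrant $\Qth$ is exactly what makes these two one-sided bounds meet at the common value $h$: the closed quadrant guarantees the lower bound $h$ on each $\lambda_i(k,t)$, while the open quadrants strictly inside it cap $\mu$ at $k$ and hence cap $\lambda(k,t)$ at $h$.

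The main obstacle I anticipate is making the counting argument airtight: ``exactly $k$ distinct points'' must be converted into a statement about measure (total mass $= k$) and into the correct comparison between $\alpha_i$ evaluated on the closed quadrant $\cQth$ and on the strictly larger-parameter open quadrants $\Qth[t,h']$ with $h' > h$. One should check that no point of $\alpha_i$ lies on the boundary lines $\{x_1 = t-h\}$ or $\{x_2 = t+h\}$ in a way that would break the inclusion $\Qth[t,h'] \subset \cQth$ or would make $\alpha_i(\cQth)$ strictly exceed $k$; the phrasing ``exactly $k$ distinct points'' is presumably chosen precisely to rule out such degeneracies, and I would spell this out. Once the bookkeeping $\alpha_i(\Qth[t,h']) \le k \le$ (the value witnessing $\lambda_i(k,t)\ge h$) is in place, the rest is just taking suprema and averaging, which is routine.
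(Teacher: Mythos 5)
Your core argument is the same as the paper's: the hypothesis forces each individual landscape to satisfy $\lambda_i(k,t)\ge h$ (since $\cQth\subset Q_{t,h'}$ for every $h'<h$, so $\alpha_i(Q_{t,h'})\ge\alpha_i(\cQth)\ge k$), and averaging gives $APL(k,t)\ge h$. The paper reaches this only after first reducing to $k=1$ via truncated rank functions and a local-maximum argument; your direct treatment of general $k$ is cleaner and loses nothing. Two caveats. First, $h$ in the statement is not a free parameter: the paper's proof takes $h=\lambda(k,t)$, so the inequality $\lambda(k,t)\le h$ that occupies the first half of your proposal is a tautology and that half is unnecessary. Second, if you insist on treating $h$ as arbitrary, your derivation of $\lambda(k,t)\le h$ is flawed: from $\alpha_i(Q_{t,h'})\le\alpha_i(\cQth)=k$ for $h'>h$ you get $\mu(Q_{t,h'})\le k$, but since $\lambda(k,t)=\sup\{h'>0:\mu(Q_{t,h'})\ge k\}$ uses a non-strict inequality, the case $\mu(Q_{t,h'})=k$ is still admissible and $\lambda(k,t)$ could exceed $h$. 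The conclusion survives, but only via an extra observation you did not make: if the average $\mu(Q_{t,h'})$ reaches $k$ while every summand is capped at $k$, then every $\alpha_i(Q_{t,h'})=k$, so each $\lambda_i(k,t)$ is pushed up to $\lambda(k,t)$ along with the average. Your flagged concern about reading ``exactly $k$ distinct points'' as total mass $k$ is reasonable; note that only the lower bound $\alpha_i(\cQth)\ge k$ is actually used in the half of the argument that matters.
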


\begin{proof}
    It suffice to show for $k=1$, because for every fixed $k$ 
    and each $\alpha_i$, we can consider its rank function, $\rk \alpha_i$, and replace with the following \cite{Betthauser:2022aa}\[
    \rk_k \alpha_i [a,b]: = \max\{ \rk \alpha_i[a,b]-(k-1),0 \},
    \] and its Möbius inversion is a PD whose landscape $\lambda_i^k(a,t)$ equals $\lambda_i(a+k-1,t)$ for all $a\ge 1$. The resulting mean measure of the new PDs gives rise to $\lambda^k(a,t)$ which equals $\lambda(a+k-1,t)$. 
    
    Let $t\in \R$. If $\lambda(1,t)=0$, $APL(1,t) \geq \lambda(k,t)$. Suppose $\lambda(k,t)=h>0$. 
    
    Fix $t',h'\in \R$ such that $Q_{t',h'} \subset \Qth$ and $\lambda(1,t')=h'$ is a local maxima.
    By assumption, $\overline{Q}_{t',h'} $ contains exactly 1 points from every $\alpha_i$, i.e. $\alpha_i(\cQth)\ge\alpha_i(\overline{Q}_{t',h'})=1$. Then $\lambda_i(1,t)\ge h$ by definition of landscapes. Therefore $APL(1,t) \geq h$.
    
\end{proof}

\section{Signed persistence measures and continuous persistence landscapes}

One can further generalize the notion of persistence measures to include signed measures and consider their vectorization. 

Let $\mu:\B(\Rl)\to \R$ be a $q-$tame measure satisfying that $\mu(Q_{(t,h)}\geq 0$, and the Jordan decomposition of $\mu$ consists of $q-$tame measures. The signed measure $\mu$ has well-defined continuous persistence landscapes, but they only satisfy properties (1)-(3) in Proposition \ref{pl properties}. Thus, the continuous landscapes fail to give rise to a premeasure and cannot recover the original measure $\mu$ via Carathéodory's extensions. 

However, we can consider the Jordan decomposition $\mu=\mu_+ - \mu_-$, each of which can be mapped to a continuous landscapes satisfying properties (1)-(4) in Propsition \ref{pl properties} and inverted to recover $\mu_+, \mu_-$ separately. 

\bibliography{references}
\bibliographystyle{plain}

\nocite{*}

\end{document}